\newcommand \la{\lambda}
\newcommand \ve{\varepsilon}
\newcommand \br{\mathbb{R}}
\newcommand \bc{\mathbb{C}}
\newcommand \bh{\mathbb{H}}
\newcommand \Ker{\operatorname{Ker}}
\renewcommand \Im{\operatorname{Im}}
\newcommand \Span{\operatorname{Span}}
\newcommand \Tr{\operatorname{Tr}}
\newcommand \SO{\mathrm{SO}}
\newcommand \Or{\mathrm{O}}
\newcommand \SP{\mathrm{Sp}}
\newcommand \Spin{\mathrm{Spin}}
\newcommand \sC{\mathsf{C}}
\newcommand \sN{\mathsf{N}}
\newcommand \cC{\mathcal{C}}
\newcommand\ag{\mathfrak a}
\newcommand\bg{\mathfrak{b}}
\newcommand\z{\mathfrak z}
\newcommand \so{\mathfrak{so}}
\newcommand \spg{\mathfrak{sp}}
\newcommand \su{\mathfrak{su}}
\newcommand \n{\mathfrak{n}}
\newcommand \Ng{\mathfrak{N}}
\newcommand \Pg{\mathfrak{P}}
\newcommand \ad{\operatorname{ad}}
\newcommand \Ad{\operatorname{Ad}}
\newcommand \diag{\operatorname{diag}}
\newcommand \Id{\operatorname{Id}}
\newcommand \<{\langle}
\renewcommand \>{\rangle}
\newcommand \ip{{\<\cdot,\cdot\>}}
\newcommand \ir{\mathrm{i}}
\newcommand \jr{\mathrm{j}}
\newcommand \kr{\mathrm{k}}
\newtheorem{theorema}{Theorem}
\newtheorem*{theorem*}{Theorem}
\newtheorem*{corollary*}{Corollary}
\newtheorem*{conj*}{Conjecture}
\newtheorem{lemma}{Lemma}
\newtheorem*{prop*}{Proposition}
\theoremstyle{definition}
\newtheorem*{definition*}{Definition}
\theoremstyle{remark}
\newtheorem*{notation*}{Notation}
\newtheorem*{algorithm*}{Algorithm}
\newtheorem*{example*}{Example}
\begin{document}

\title{Non-singular weakly symmetric nilmanifolds}

\author{Y.~Nikolayevsky}
\address{Department of Mathematical and Physical Sciences, La Trobe University, Melbourne, Australia 3086}
\email{y.nikolayevsky@latrobe.edu.au}
\thanks{The first named author was partially supported by ARC Discovery Grant DP210100951, and the second named author by an RFG grant from the University of Pennsylvania.}

\author{W.~Ziller}
\address{Department of Mathematics, University of Pennsylvania, Philadelphia, PA 19104, USA} 
\email{wziller@math.upenn.edu}

\subjclass[2020]{53C30, 53C25, 22E25, 17B30}

\keywords{non-singular nilmanifold, weakly symmetric manifold}

\dedicatory{The authors dedicate this paper to the memory of Joe Wolf. Among his many remarkable contributions to geometry and Lie theory, are deep results in the theory of weakly symmetric and geodesic orbit spaces. This research is a small contribution to the topic.}

\begin{abstract}
A Riemannian manifold $M$ is called weakly symmetric if any two points in $M$ can be interchanged by an isometry. The compact ones have been well understood, and the main remaining case is that of 2-step nilpotent Lie groups. We give a complete classification of simply connected non-singular weakly symmetric nilmanifolds. Besides previously known examples, there are new families  with 3-dimensional center, and a one-parameter family of dimensions 14. The classification is based on the authors classification of non-singular 2-step nilpotent Lie groups for which every geodesic is the image of a one parameter group of isometries.
\end{abstract}

\maketitle

\section{Introduction}
\label{s:intro}

A complete Riemannian manifold $M$ is called \emph{weakly symmetric} if any two points in $M$ can be interchanged by an isometry. It is called \emph{geodesic orbit}, GO for short, if every geodesic is the image of a one-parameter group of isometries.  Both concepts have been studied extensively. In \cite{BKV} it was shown that weakly symmetric spaces are GO. But the converse does not hold. Clearly, both conditions imply that $M$ is homogeneous, and so $M=G/H$ where $G$ is the full isometry group.

Weakly symmetric spaces fall into several very different categories. When the group $G$ is reductive (e.g., when $M$ is compact and simply connected), such spaces have been classified in \cite{Vi,Ya1,Wo2}. Another category is the one of weakly symmetric nilmanifolds. It is  well known that for a nilpotent Lie group both the GO and the weakly symmetric condition imply that it is either abelian or 2-step nilpotent, see \cite[Theorem~2.2]{Gor}. In this case, one finds a partial classification in \cite{Ya2, Wo2}. This  classification is based on the fact that weakly symmetric spaces are commutative \cite{Sel}, and the partial classification of the former is obtained from the classification of the latter.

\smallskip

To state our results, we recall the following standard construction. Let $(M,g)$ be a metric $2$-step nilpotent Lie group and $(\n, \ip)$ the corresponding metric Lie algebra. Let $\z = [\n,\n]$,  $\ag=\n^\perp$. Here we can assume that $\z$ is the center of $\n$ since otherwise $\n$ is the product of $\n'\subset\n$ with an abelian Lie algebra. The Lie brackets of $\n$ are encoded in the endomorphisms
\begin{equation}\label{jz}
	J_Z \in \so(\ag) \text{ with } \<J_ZX,Y\>=\<Z,[X,Y]\>, \text{ for all } X,Y \in \ag \text{ and } Z\in \z
\end{equation}
which defines a linear subspace:
\begin{equation*}
	V=\Span\{  J_Z\mid Z\in \z\} \subset \so(\ag).
\end{equation*}

Moreover, $V$ inherits an inner product from the push-forward of the inner product on $\z\subset\n$. Conversely, we can start with a Euclidean vector space $\ag$, a  linear subspace $V\subset \so(\ag)$ and an inner product $\ip$ on $V$. This defines a metric nilpotent Lie algebra  $\n$ by setting $\n=V\oplus\ag$ with  $V=[\n,\n]$  and Lie brackets $[\ag,\ag]\subset V$ given by \eqref{jz}. Furthermore, it inherits an inner product  by declaring the decomposition to be orthogonal and using the given inner products on $V$ and $\ag$.  There is a one-to-one correspondence between pairs $(V, \ip)$ with $V\subset \so(\ag)$ and simply connected metric $2$-step nilpotent Lie groups. We call $(V, \ip)$ a \emph{WS-pair} (resp. GO-pair) if the corresponding simply connected nilpotent metric Lie group is weakly symmetric (resp. geodesic orbit). We call an inner product $\ip$ on a subspace $V \subset \so(\ag)$ \emph{admissible} if $(V, \ip)$ is a WS-pair, and \emph{standard} if $\ip$ is the restriction of a bi-invariant inner product on $\so(\ag)$. If $(V, \ip)$ is a WS-pair, then a standard inner product is automatically admissible.

The case where $V$ is a subalgebra of $\so(\ag)$, allowing any bi-invariant metric on $V$, corresponds to the case where the nilpotent Lie group is naturally reductive, see \cite[Theorem~2.5]{Gor}. These are all GO, but WS ones constitute a small subset of them \cite[Theorem~5]{L3}. A second class of special examples are 2-step nilpotent Lie algebras constructed from Clifford modules, so called (generalized) H-type algebras. We call such examples to be of \emph{Clifford type}, allowing a non-standard admissible inner product on $V$. In \cite[Example~6]{Z}, \cite[Theorem~1]{BRV} one finds a classification of WS-pairs of Clifford type with a standard inner product, and in \cite[Corollary~5.11]{L2} a classification with any admissible inner product. There two families, and one  pair in dimension $31$, which are GO but not WS.

A $2$-step nilpotent Lie algebra is called \emph{non-singular} if all nonzero elements of $V$ are invertible. All $2$-step nilpotent Lie algebras of Clifford type have this property. But the class of non-singular Lie algebras is surprisingly large and is not classified, unless $\dim V=1$ or $2$. For the current state of knowledge on non-singular $2$-step nilpotent Lie algebras, we refer the reader to~\cite{E1, LO} and the bibliographies therein. Our goal is to classify  2-step nilpotent non-singular WS-pairs. In \cite{NZ} the authors classified all non-singular GO-pairs. Since weakly symmetric pairs are GO, we can use this classification in order to determine which ones are weakly symmetric.

\smallskip

In the following, $\bh$ denotes the algebra of quaternions with the standard inner product, $L_q$ and $R_q$ the left and the right multiplications by $q \in \bh$, and $\ir,\jr,\kr$ the standard basis of the space of imaginary quaternions $\Im\bh$. Furthermore, $J_1,\dots , J_7$ are seven anticommuting complex structures on $\br^8$.

\begin{theorem*} 
  Let $(V, \ip)$ be a non-singular WS-pair. Then it is either of Clifford type, or belongs to one of  the following:
\begin{enumerate}[label=\emph{(\alph*)},ref=\alph*]
    \item \label{it:ws1} 
    $\dim V =1$ and $V=\br J$ with $J$ a non-singular skew-symmetric matrix.

    \item \label{it:ws2}  
    $\dim V =2$. We  identify $\ag$ with $\bh^p$ and let $V=\Span(J_1, J_2)$, with
     \begin{equation*}
    	 J_1=\diag(L_{a_1}, L_{a_2}, \dots, L_{a_p}),\quad  J_2=\diag(L_{b_1}, L_{b_2}, \dots, L_{b_p}),
    \end{equation*}
    where, for each $s=1\dots p$, the quaternions $a_s, b_s\in\Im\bh$ are linearly independent. The inner product $\ip$ on $V$ is arbitrary.

    \item \label{it:ws3}
    $\dim V = 3$.   We identify $\ag$ with $ \bh^p$ and let $V=\Span(J_1, J_2, J_3)$ with
    \begin{equation*}
    \hspace{30pt}	J_1=\diag(\la_1 L_{\ir}, \dots, \la_p L_{\ir}), \ J_2=\diag(\mu_1 L_{\jr}, \dots, \mu_p L_{\jr}), \ J_3=\diag(\mu_1 L_{\kr}, \dots, \mu_p L_{\kr}),
    \end{equation*}
    where  $\la_s, \mu_s\in \br$ are non-zero. The inner product on $V$ is such that the element $J_1, J_2$ and $J_3$ are mutually orthogonal and $\|J_2\|=\|J_3\|$.

    \item \label{it:ws6} 
    $\dim V = 6$. Then $\dim \ag=8$ and $V=\Span(J_1, J_2, J_3, J_4, J_5,J')$, where $J'=J_7 \cos \theta + J_6J_7 \sin \theta, \; \theta \in (0, \pi/2)$. The inner product on $V$ is such that its restriction to $\Span(J_1, J_2, J_3, J_4, J_5)$ is standard and $J' \perp J_i, \, i =1, \dots, 5$.
\end{enumerate}
\end{theorem*}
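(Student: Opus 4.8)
The plan is to reduce weak symmetry to an algebraic condition on the isotropy group and then run through the classification of non-singular GO-pairs from \cite{NZ}. Since $\n=V\oplus\ag$ is non-singular $2$-step nilpotent with center $V=\z$, Wilson's theorem identifies the full isometry group as $N\rtimes K$ with $K=\operatorname{Aut}(\n)\cap\O(\n)$; every $A\in K$ splits as $A=a\oplus c$ with $a\in\O(\ag)$, $c\in\O(V,\ip)$ and $aJ_Za^{-1}=J_{cZ}$. Using the standard reduction of weak symmetry to the isotropy representation (for each tangent vector there is an isotropy element reversing it), $(V,\ip)$ is a WS-pair if and only if for every $X\in\ag$ and every $Z\in V$ there is $a\in\O(\ag)$ with $aVa^{-1}=V$, inducing an orthogonal $c$ on $(V,\ip)$, and satisfying $aX=-X$ together with $aJ_Za^{-1}=-J_Z$. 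This criterion is the engine of the whole argument.

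Because WS-pairs are GO \cite{BKV}, every non-singular WS-pair occurs in the authors' classification of non-singular GO-pairs \cite{NZ}. Hence it suffices to take each entry of that list, and each inner product on its $V$, and decide whether the criterion above holds. The proof therefore divides into a positive part --- showing that the four families \eqref{it:ws1}--\eqref{it:ws6} and the Clifford-type pairs satisfy the criterion for exactly the stated inner products --- and a negative part, discarding the remaining GO-pairs (in particular the two Clifford-type families and the dimension-$31$ pair that are GO but not WS) by exhibiting a single $Z$ for which no reversing $a$ exists.

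For the positive direction I would separate the two reversal requirements. Reversing $X$ reduces to transitivity of the centralizer $\mathrm{C}_{\O(\ag)}(V)$ on the sphere $\{\,|Y|=|X|\,\}$: any two maps with the same induced $c$ differ by a centralizer element, and in the quaternionic families \eqref{it:ws2},\eqref{it:ws3} the centralizer contains $\Sp(p)$ acting diagonally on $\ag=\bh^p$, which is sphere-transitive. Reversing $Z$ is handled through the group $\tilde G\subset\O(V)$ of induced maps $c$: inner automorphisms $L_q$ contribute the rotations of $V$ compatible with the parameters, while an orientation-reversing $a$ (such as quaternionic conjugation) contributes reflections, and a given $Z$ is reversed by a rotation through $\pi$ about an axis of $V$ orthogonal to $Z$. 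The admissibility constraints on $\ip$ are exactly the requirement that these $c$ be orthogonal for $\ip$: in \eqref{it:ws1} this is immediate since $J$ and $-J$ are $\O(\ag)$-conjugate; in \eqref{it:ws2} the realizable $\tilde G$ is large enough that every $\ip$ works; in \eqref{it:ws3} the available rotations carry $J_2,J_3$ into one another but treat $J_1$ separately, which forces precisely $J_1\perp\Span(J_2,J_3)$ and $\|J_2\|=\|J_3\|$.

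The main obstacle is the exceptional $8$-dimensional family \eqref{it:ws6}. Here I would realize $V=\Span(J_1,\dots,J_5,J')$ inside the Clifford algebra $\Cl_7$ acting on its spin module $\br^8$ and exploit the $\Spin(7)$ (octonion/triality) symmetry to determine $N_{\O(\ag)}(V)$ and its action $\tilde G$ on $V$. The spin representation is transitive on the unit sphere of $\br^8$, which takes care of reversing $X$; the delicate point is that $\tilde G$ acts on $\Span(J_1,\dots,J_5)$ strongly enough to force a standard inner product there while singling out $J'$, yielding the stated condition $J'\perp J_i$, and that a rotation reversing any prescribed $Z$ remains available exactly for the one-parameter family with $\theta\in(0,\pi/2)$. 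Establishing this, together with the non-singularity of every nonzero element of $V$, requires a precise hold on the normalizer and the triality action rather than the soft transitivity arguments that suffice for \eqref{it:ws1}--\eqref{it:ws3}, and is where the real work lies.
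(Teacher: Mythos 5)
Your framework is right: reduce weak symmetry to the condition that for every $J\in V$ and $X\in\ag$ some $N\in\sN(V)$ satisfies $NX=-X$ and $N\cdot J=-J$, then run through the non-singular GO classification of \cite{NZ}. Your positive verifications for (a)--(c) match the paper in spirit (product of a pure-normalizer element inducing the right reflection on $V$ with a centralizer element fixing up $X$), and for (d) your triality/$\Spin(7)$ idea is a plausible substitute for the paper's use of $\SP(2)\subset\SO(8)$ together with conjugation by $J_6$, which acts as $-\Id$ on $V$. However, the proposal has a genuine gap: it does not engage with the negative half of the theorem, which is where almost all of the work lies, and it misidentifies what has to be excluded. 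The cases that must be ruled out are not ``the two Clifford-type families and the dimension-$31$ pair'' (those are disposed of by citing the known classification of Clifford-type WS-pairs); they are the \emph{non-Clifford} GO-pairs of Theorem~\ref{tha:class} that do not appear in the statement, namely the $\dim V=7$ family, the $\dim V=3$ representation type (c)(ii), and --- crucially --- the general $\dim V=3$ centralizer type (c)(i), in which the triples $\{a_s,b_s,c_s\}$ are arbitrary bases of $\Im\bh$ and must be shown to reduce to the rigid normal form $J_1=\diag(\la_sL_{\ir})$, $J_2=\diag(\mu_sL_{\jr})$, $J_3=\diag(\mu_sL_{\kr})$. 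Saying one will ``exhibit a single $Z$ for which no reversing $a$ exists'' names the goal, not a mechanism.

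Concretely, three arguments are missing. For $\dim V=7$ the paper computes the spectrum of $J^2$ for $J=J_u+aJ_6+bJ'$ and extracts the conjugation-invariant $\Phi(J)=\Tr\bigl((J^2+\|J\|^2I_8)^2\bigr)=4\sin^2\theta\,\<J,J'\>^2\sum_i\<J,J_i\>^2$; invariance of $\Phi$ forces $N\cdot J'=\pm J'$, then $N\cdot J_6=\pm J_6$, and finally, using $J_1\cdots J_5J_6J_7=\pm I_8$, the sign on $J_6$ must be $+1$, so $J_6$ can never be reversed. For (c)(i) the paper proves a structural lemma: any nonzero block $N_{rs}$ of $N\in\sN(V)$ has the form $L_{w}R_{w'}$, which forces $\det(A_sA_r^{-1})=\det(\phi_N)\in\{\pm1\}$, and if $\sN(V)/\sC(V)$ were discrete the reversing elements would have to interchange two blocks $\bg_1\leftrightarrow\bg_2$ of $\ag$ and so could not satisfy $NX=-X$ for $X\in\bg_1$; hence the pure normalizer contains a nonzero $P=\diag(L_{u_1},\dots,L_{u_p})$, and $\ad_P$-invariance of $V$ and of $\ip$ produces exactly the normal form and the constraints $J_1\perp J_2,J_3$, $J_2\perp J_3$, $\|J_2\|=\|J_3\|$ in part (c). For (c)(ii) a Schur-type block argument shows $\sN(V)\subset\Or(\bh^p)\times\Or(W)$, reducing to the $\su(2)$-module $W$ with all irreducible summands of dimension $\ge 8$, which is excluded by Lauret's results \cite[Theorems~3 and~5]{L3}. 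Without these three steps (or substitutes for them) the classification is not established; your proposal as written only re-verifies that the listed examples are WS.
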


Notice that in the non-Clifford case a non-singular nilmanifold with $\dim \z \in \{1, 2, 6\}$ is WS if and only if it is GO, see Theorem~\ref{tha:class} below. But if $\dim V=3$ or $7$, there are non-isomorphic families which are GO, but neither WS nor of Clifford type. We remark that if we allow some of $a_s, b_s$ to be linearly dependent in case~\eqref{it:ws2}, or some of $\la_s, \mu_s$ to be zero in case~\eqref{it:ws3}, the resulting nilpotent Lie group is still WS, but is no longer non-singular. Notice also that in case~\eqref{it:ws6}, different values of $\theta$ give non-isomorphic Lie algebras since the maxima of the maximal eigenvalues of the matrices $J^2, \, J \in V$, with $\Tr (J^2)=-1$, are distinct~\cite[Section~3]{NZ}. We can view all of the nilpotent Lie algebras in the above theorem as deformations of the corresponding ones of Clifford type by changing the structure constants.

In Section~\ref{s:discuss} we discuss the relation of the above classification with previously known results.

\section{Preliminaries}
\label{s:prel}

Given a subspace $V \subset \so(\ag)$ with an inner product $\ip$, denote by $\sN(V)$ its \emph{orthogonal normalizer}:
\begin{equation*}
	\sN(V) = \{N \in \Or(\ag) \, | \, NVN^{-1} \subset V, \text{ and } (\Ad_N)_{|V}\in \Or(V)\}.
\end{equation*}
and by $\sC(V)$ the centralizer:
\begin{equation*}
	\sC(V) = \{N \in \Or(\ag) \, | \, JN=NJ  \text{ for all  } J\in V\}.
\end{equation*}
Clearly, $\sC(V)$ is a normal subgroup of $\sN(V)$, and hence on the Lie algebra level we have a decomposition into ideals:
\begin{equation*}
\Ng(V)=\cC(V)\oplus \Pg(V)
\end{equation*}
We call $\Pg(V)$ the \emph{pure normalizer} which clearly acts effectively  on $V$.

\smallskip

The isometry group of a left invariant metric on $\n=\z\oplus\ag$ is equal to
\begin{equation*}
K=\{  (A,B) \in O(\z)\times O(\ag)\mid BJ_ZB^{-1}=J_{AZ}  \text{ with }J_Z\in V\},
\end{equation*}
see~\cite[Theorem~4.2]{Wo1}.
Thus the condition for the nilmanifold to be weakly symmetric is equivalent to the following: for any $J \in V$ and any $X \in \ag$, there exists an element $N \in \sN(V)$ such that
\begin{equation}\label{eq:wscond}
	NX = -X \qquad \text{and } NJ=-JN.
\end{equation}
For simplicity, we will also write the latter equation as $N\cdot J = -J$, where $N\cdot J=N^tJN.$

We start by recalling the classification of non-singular GO 2-step nilpotent Lie algebras.

\begin{theorema} [{\cite[Theorem~1]{NZ}}] \label{tha:class}
  Let $(V, \ip)$ be a $2$-step nilpotent non-singular GO-pair. Then  it is either of Clifford type, or belongs to one of  the following:
  \begin{enumerate}[label=\emph{(\alph*)},ref=\alph*]
    \item \label{it:th1} $\dim V =1$. Then $V$ is spanned by a non-singular skew-symmetric matrix.

    \item \label{it:th2} $\dim V =2$. We  identify $\ag$ with $\bh^p$ and then $V=\Span(J_1, J_2)$ with
    \begin{equation*}
    	J_1=\diag(L_{a_1}, L_{a_2}, \dots, L_{a_p}),\quad  J_2=\diag(L_{b_1}, L_{b_2}, \dots, L_{b_p}),
    \end{equation*}
    where $a_s, b_s \in \Im \bh$ are linearly independent.

    \item \label{it:th3}
    $\dim V =3$. Two cases are possible:
    \begin{enumerate}[label=\emph{(\roman*)},ref=\roman*]
      \item \label{it:th3cen}
      Identify $\ag$ with $\bh^p$. Then  $V=\Span(J_1, J_2, J_3)$ with
      \begin{gather*}
     	\hspace{30pt}  J_1=\diag(L_{a_1}, L_{a_2}, \dots, L_{a_p}),\quad  J_2=\diag(L_{b_1}, L_{b_2}, \dots, L_{b_p}), \\
        \hspace{30pt} J_3=\diag(L_{c_1}, L_{c_2}, \dots, L_{c_p}),
      \end{gather*}
      where, for every $s = 1, \dots p$, the quaternions  $\{a_s,b_s,c_s\}$ are a basis of $\Im\bh$.

      \item \label{it:th3rep}
      Let $\ag= \bh^p \oplus W, \, p \ge 0$, and $\rho\colon\so(3)\to \so(W)$ be a quaternionic representation on $W=\br^{4q}, \, q>1$, with no $4$-dimensional sub-representations. Then $V=\Span(J_1, J_2, J_3)$ with
      \begin{gather*}
      	\hspace{30pt}  J_1=\diag(\la_1 L_{\mathrm{i}}, \dots, \la_p L_{\mathrm{i}}, \rho(\mathrm{i})),\quad  J_2 = \diag(\la_1 L_{\mathrm{j}}, \dots, \la_p L_{\mathrm{j}},\rho(\mathrm{j}) ), \\
      \hspace{30pt}  J_3=\diag(\la_1 L_{\mathrm{k}},  \dots, \la_p L_{\mathrm{k}},\rho(\mathrm{k})),
      \end{gather*}
     where $\la_s \ne 0$ and where we identify $\so(3)$ with $\Im\bh$.
    \end{enumerate}

    \item \label{it:th6}
    $\dim V =6$. Then $\dim \ag=8$ and $V=\Span(J_1, J_2, J_3, J_4, J_5,J')$, where $J'=J_7 \cos \theta + J_6J_7 \sin \theta, \; \theta \in (0, \pi/2)$.

    \item \label{it:th7}
    $\dim V =7$. Then
   $\dim \ag=8$ and $V=\Span(J_1, J_2, J_3, J_4, J_5,J_6,J')$, where $J'=J_7 \cos \theta + J_6J_7 \sin \theta, \; \theta \in (0, \pi/2)$.
  \end{enumerate}
\end{theorema}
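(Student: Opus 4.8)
The statement is the geodesic-orbit classification underlying the whole paper, and I would prove it by first converting the GO property into an algebraic condition on $(V,\ip)$ and then classifying by $\dim V$. Writing the isotropy Lie algebra as pairs $(C,A)$ with $C\in\so(\z)$ and $A\in\so(\ag)$, Wolf's description of $K$ gives $[A,J_Z]=J_{CZ}$ for all $Z$, so $A\in\Ng(V)$ and $C$ is the induced action on $\z\cong V$. A Killing field $D+v$ with $D=(C,A)$ and $v=a+z$ ($a\in\ag,\ z\in\z$) generates a geodesic through the origin iff $\langle [v,Y]+DY,\,v\rangle=0$ for all $Y\in\n$. Splitting $Y$ into its $\ag$- and $\z$-components, and using $\langle[a,b],z\rangle=\langle J_za,b\rangle$ together with the skewness of $A$ and $C$, this single identity collapses to the two equations $Aa=J_za$ and $Cz=0$. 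Hence $(V,\ip)$ is a GO-pair iff for every $a\in\ag$ and $Z\in\z$ there exists $A\in\Ng(V)$ with $Aa=J_Za$ and $[A,J_Z]=0$; indeed $Cz=0$ is equivalent to $[A,J_Z]=0$, since $J_{CZ}=[A,J_Z]$ and $Z\mapsto J_Z$ is injective.

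The decisive feature is that the constraint $CZ=0$ forces $A$ to \emph{commute} with $J_Z$. Decomposing $\Ng(V)=\cC(V)\oplus\Pg(V)$, the $\cC(V)$-part of $A$ automatically commutes with all of $V$, so the effective requirement is that the $\Pg(V)$-part stabilize $J_Z$ under the action of $\Pg(V)$ on $V$. Thus GO becomes the condition that, for each $Z$, the invertible operator $J_Z$ agree on every vector $a$ with some skew endomorphism drawn from $\cC(V)$ together with the stabilizer $\Pg(V)_{J_Z}$; equivalently $J_Za\in\bigl(\cC(V)+\Pg(V)_{J_Z}\bigr)a$ for all $a$. Non-singularity now enters: since every nonzero $J\in V$ is invertible, each $J_Z$ equips $\ag$ with a scaled complex structure, and subspaces of $\so(\ag)$ consisting of invertible operators are severely restricted by Hurwitz--Radon--Adams type bounds, constraining $\dim V$ in terms of $\dim\ag$ and forcing $\ag$ to split blockwise into quaternionic, respectively octonionic, modules. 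The core of the argument is then to determine, for each such $V$, whether the commuting-normalizer orbit-derivatives are large enough to realize every $J_Z$ on all of $\ag$.

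With this criterion I would classify by $\dim V$. For $\dim V=1$ one takes $A=J$, which lies in $\Ng(V)$ and commutes with $J$, so every such pair is GO, giving (a). For $\dim V=2,3$ the invertibility of the whole pencil together with the GO criterion should force the commuting normalizer to act transitively enough that $\ag$ splits into quaternionic lines, producing the block forms $\diag(L_{a_s})$ of (b) and the two sub-cases of (c); the representation sub-case (c)(ii) is exactly the quaternionic $\so(3)\cong\spg(1)$-representations with no $4$-dimensional summand, to be identified by the representation theory of $\spg(1)$. For $\dim\ag=8$ one meets the exceptional families (d) and (e): here $\Span(J_1,\dots,J_7)$ is the $\Cl(7)$/$\Spin(7)$ system on $\br^8$, and the admissible $V$ are spanned by five or six of the $J_i$ together with the deformed generator $J'=J_7\cos\theta+J_6J_7\sin\theta$. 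Finally one must show that, outside Clifford type, no GO-pairs occur for $\dim V\in\{4,5\}$ or $\dim V\ge 8$.

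The main obstacle is the exceptional $\dim\ag=8$ analysis and the exclusion of the remaining dimensions. Verifying that the one-parameter deformations $J'$ stay non-singular and GO for every $\theta\in(0,\pi/2)$, and that they exhaust the possibilities, requires explicit octonionic computation inside $\Cl(7)$ and a precise description of $\Ng(V)$ as a $\Spin(7)$-type normalizer; this is where the continuous modulus $\theta$ is genuinely delicate, since it alters the isomorphism type of $\n$ while preserving the GO property. Excluding $\dim V=4,5$ and all $\dim V\ge 8$ in the non-Clifford case is equally subtle: one must show that the combined orbits of $\cC(V)$ and $\Pg(V)_{J_Z}$ are too small to reproduce $J_Z$ on every $a$ unless $V$ has one of the listed shapes, a statement with no uniform structural reason (unlike the Clifford case) that must be forced through the interplay of non-singularity with the normalizer representation.
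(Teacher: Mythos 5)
A preliminary remark: this paper does not prove Theorem~\ref{tha:class} at all --- it is imported verbatim from \cite[Theorem~1]{NZ} (a separate paper, listed as in preparation), so there is no internal proof to compare your attempt against; I can only assess the attempt on its own merits.

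Your opening reduction is correct and is the standard starting point: the geodesic lemma for $\n = \ag \oplus \z$ does collapse to the two conditions $Aa = J_Za$ and $CZ = 0$, and $CZ=0$ is indeed equivalent to $[A,J_Z]=0$ by injectivity of $Z \mapsto J_Z$; this is Gordon's criterion \cite{Gor}, and your reformulation $J_Za \in \bigl(\cC(V)+\Pg(V)_{J_Z}\bigr)a$ via the splitting $\Ng(V)=\cC(V)\oplus\Pg(V)$ is a faithful restatement of it. The genuine gap is everything after that: the proposal never actually classifies anything. The steps that constitute the entire content of the theorem --- that for $\dim V = 2,3$ the GO condition plus non-singularity forces $\ag \cong \bh^p$ with the block forms $\diag(L_{a_s})$ and produces exactly the two sub-cases (\ref{it:th3cen}) and (\ref{it:th3rep}); that for $\dim V = 6,7$ one gets $\dim\ag = 8$ with the one-parameter deformation $J' = J_7\cos\theta + J_6J_7\sin\theta$ and nothing else; and that no non-Clifford GO-pairs exist for $\dim V \in \{4,5\}$ or $\dim V \ge 8$ --- are all deferred with phrases like ``should force,'' ``to be identified by,'' ``one must show,'' and ``requires explicit octonionic computation.'' You even state in the final paragraph that these exclusions have ``no uniform structural reason,'' which is an admission that the argument is not supplied. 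A proof outline whose hard steps are labelled as obstacles is not a proof.

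One further point is overstated in a way that matters. You assert that non-singularity, via Hurwitz--Radon--Adams bounds, forces $\ag$ ``to split blockwise into quaternionic, respectively octonionic, modules.'' Adams-type theorems only bound $\dim V$ in terms of $\dim\ag$ (for a non-singular $V \subset \so(\ag)$ one gets $\dim V \le \rho(\dim\ag)-1$ from linearly independent vector fields on the sphere); they say nothing about a block decomposition of $\ag$. Indeed, as the paper itself emphasizes in the introduction, non-singular subspaces of $\so(\ag)$ form a large unclassified family already for $\dim V = 3$. The quaternionic block structure in cases (\ref{it:th2}) and (\ref{it:th3}) can only emerge from the interplay of the GO criterion with non-singularity --- i.e., from a detailed analysis of how large $\cC(V) + \Pg(V)_{J_Z}$ must be --- and that analysis is precisely what is missing.
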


Admissible inner products for the GO-pairs in Theorem~\ref{tha:class} are as follows:
\begin{itemize}
	\item If $\dim V \in \{1,2\}$ or $\dim V = 3$ in case~\eqref{it:th3}\eqref{it:th3cen} (these are subspaces of so-called \emph{centralizer type}), then any inner product on $V$ is admissible.
	\item If $\dim V=3$ in case~\eqref{it:th3}\eqref{it:th3rep}, the inner product is standard.
	\item If $\dim V =6, 7$, then the restriction of the inner product to $\Span(J_1, J_2, J_3, J_4, J_5)$ is standard, and $J' \perp J_i$, respectively  $J_6,J' \perp J_i$, for $i=1, \dots, 5$.
\end{itemize}

We also state the classification of GO-pairs of Clifford type. Let $\ip$ be an inner product on $V$ and let $S$ be the symmetric operator defined by $\<J, J\> = (SJ,J)$ for $J \in V$, where $(\cdot, \cdot)$ is a standard inner product. Let $\la_1, \dots, \la_q$ be the eigenvalues of $S$ with multiplicities $m_1, \dots, m_q$, respectively. We say that the inner product $\ip$ has \emph{eigenvalue type} $(m_1, \dots, m_q)$; so a standard inner product has eigenvalue type $(\dim V)$.

 \begin{theorema} [{\cite{Z}, \cite{BRV}, \cite[Corollary~5.11]{L2}}] \label{tha:Cliff}
  A WS-pair $(V, \ip)$ is of Clifford type precisely in the following cases.
 	\begin{enumerate}[label=\emph{(\alph*)},ref=\alph*]
 		\item \label{it:Cliff12}
 		$\dim V \in \{1,2\}$ and $\ip$ is arbitrary.
 		
 		\item \label{it:Cliff3}
 		$\dim V = 3$ and the eigenvalue type of $\ip$ is either $(3)$ or $(2,1)$.
 		
 		\item \label{it:Cliff5}
 		$\dim V = 5, \; \dim \ag = 8$ and $\ip$ is standard.
 		
 		\item \label{it:Cliff6}
 		$\dim V = 6, \; \dim \ag = 8$ and the eigenvalue type of $\ip$ is either $(6)$ or $(5,1)$.
 		
 		\item \label{it:Cliff7}
 		$\dim V = 7, \; \dim \ag = 8$ and the eigenvalue type of $\ip$ is either $(7)$ or $(5,2)$.
 		
 		\item \label{it:Cliff73}
 		$\dim V = 7, \; \dim \ag = 16$, the inner product $\ip$ is standard, and $V$ is spanned by $7$ pairwise anticommuting complex structures $K_i$ such that $K_1 \dots K_7 = \pm \Id_\ag$.
 	\end{enumerate}
 \end{theorema}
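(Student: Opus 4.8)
The plan is to translate the weakly symmetric condition \eqref{eq:wscond} into the representation theory of Clifford algebras and then verify it case by case. Being of Clifford type means that $\ag$ is a module over the Clifford algebra $\Cl(m)$ with $m=\dim V$: there is a basis $J_1,\dots,J_m$ of $V$, orthonormal for the standard inner product $\ipr$, consisting of pairwise anticommuting complex structures, and the admissible inner product $\ip$ is recorded by the symmetric operator $S$ with $\<J,J\>=(SJ,J)$. First I would describe $\sN(V)$ for such $V$. It is generated by the centralizer $\sC(V)$ --- the commutant of the Clifford action, a classical group (orthogonal, unitary, or symplectic) over the associated real division algebra $\br$, $\bc$, or $\bh$ determined by $m \bmod 8$ --- together with the image in $\Or(\ag)$ of $\mathrm{Pin}(m)\subset\Cl(m)$. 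The Pin image realizes on $V$ its vector representation, i.e.\ all of $\Or(V,\ipr)$; the defining requirement $(\Ad_N)_{|V}\in\Or(V,\ip)$ then singles out exactly those induced maps that commute with $S$, so that a non-standard $\ip$ restricts the available transformations of $V$ to the ones preserving the eigenspaces of $S$.

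Next I would reduce \eqref{eq:wscond} to an orbit condition on $\ag$. A direct Clifford computation shows that for a $\ipr$-unit vector $w\in V$ the complex structure $J_w$ satisfies $J_w\cdot J=-J$ for every $J\perp w$ and $J_w\cdot w=w$, so $J_w\in\sN(V)$ precisely when $w$ is an eigenvector of $S$; this supplies, for a prescribed $J$, an element reversing it. Since the elements of $\sC(V)$ act trivially on $V$ and the $\mathrm{Spin}$ lifts fixing $J$ act on $V$ but not freely on $\ag$, the set of $N\in\sN(V)$ with $N\cdot J=-J$ is a coset of the subgroup fixing $J$ up to sign, and \eqref{eq:wscond} asks that this coset contain an element sending $X$ to $-X$. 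Thus, fixing $J$, the problem becomes whether $-X$ lies in the orbit of $J_wX$ under the subgroup of $\sN(V)$ that fixes $J$; for the standard inner product this transitivity forces the module to be essentially irreducible, which is why $\dim\ag$ is pinned to $8$ for $m=5,6,7$ and to the reducible, triality-constrained value $16$ in the last case.

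I would then run the case analysis in $m=\dim V$ using Bott periodicity. For $m\in\{1,2\}$ the division algebra is $\bc$ or $\bh$, the centralizer is large, and \eqref{eq:wscond} holds for every $\ip$, giving \eqref{it:Cliff12}. For $m=3$ the module is quaternionic and the reversing isometries come from the $\mathrm{Sp}(1)$ factor of $\sC(V)$; here one determines which eigenvalue types of $S$ survive and obtains $(3)$ and $(2,1)$, as in \eqref{it:Cliff3}. The cases $m=5$ (standard, $\dim\ag=8$, \eqref{it:Cliff5}), $m=6$ (types $(6),(5,1)$, \eqref{it:Cliff6}), and $m=7$ (types $(7),(5,2)$ for $\dim\ag=8$ and the standard $K_1\cdots K_7=\pm\Id$ module for $\dim\ag=16$, \eqref{it:Cliff7} and \eqref{it:Cliff73}) are handled by the same mechanism, the exceptional low dimensions being forced by the requirement that a single irreducible $\Cl(m)$-module of real dimension $8$, or the specific pair summing to $16$, be present so that the orbit condition of the previous step can be met.

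The hard part will be the non-existence direction for the excluded eigenvalue types together with the dimension-$8$ phenomena. Once $\ip$ is non-standard, preserving the eigenspaces of $S$ genuinely shrinks the group of available isometries, and for every eigenvalue type outside the listed ones I would need to exhibit a pair $(J,X)$ admitting no $N\in\sN(V)$ reversing both --- typically through an eigenvalue or dimension-count obstruction showing that the orbit of $J_wX$ under the stabilizer of $J$ misses $-X$. Triality in dimension $8$ requires separate care: it enlarges $\sN(V)$ for $m=7$, is responsible for the appearance of the type $(5,2)$ in place of a generic $(6,1)$, and distinguishes the two admissible values of $\dim\ag$; I would therefore treat the range $m=7,8$ by the exceptional isomorphisms of the spin groups rather than by the generic periodicity argument.
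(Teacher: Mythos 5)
First, a point of order: the paper does not prove Theorem~\ref{tha:Cliff} at all. It is imported as background from \cite{Z}, \cite{BRV} and \cite[Corollary~5.11]{L2}, so there is no in-paper proof to compare yours against; anything you write has to stand on its own against those references. On its own terms, what you have written is a programme rather than a proof. The entire content of the theorem lies in the two directions of the case analysis: verifying~\eqref{eq:wscond} for exactly the listed eigenvalue types and refuting it for all others (why $(2,1)$ survives for $\dim V=3$ but $(1,1,1)$ does not; why $(5,1)$ and $(5,2)$ survive for $\dim V=6,7$ but $(6,1)$, $(4,2)$, $(4,3)$, etc.\ do not; why $\dim\ag=16$ is excluded for $\dim V=5,6$ and admitted for $\dim V=7$ only when the two irreducible summands have the same chirality). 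You explicitly defer all of this (``the hard part will be\dots''), so nothing in the statement is actually established. The preliminary structural claims also carry weight you have not discharged: that $\sN(V)$ is generated by $\sC(V)$ together with the Pin image, and that the condition $(\Ad_N)_{|V}\in\Or(V,\ip)$ cuts the induced action down to the commutant of $S$, are both plausible but unproved, and the latter is checked in your text only for the reflection generators $J_w$.

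There is also a concrete error in the $\dim V=3$ case: you attribute the reversing isometries to the $\SP(1)$ factor of $\sC(V)$. By definition the centralizer acts trivially on $V$, so no element of it can satisfy $N\cdot J=-J$ for $J\ne 0$; centralizer elements are needed only to arrange $NX=-X$ after a reversing element has been found. The elements reversing a given $J$ come from $\sN(V)$ modulo $\sC(V)$ (in the quaternionic picture, left multiplications by unit imaginary quaternions orthogonal to the quaternion representing $J$), and since these realize only $\SO(3)$ on $V$, no single $N$ acts as $-\Id$ on $V$ and the choice of $N$ must depend on $J$. This is precisely the mechanism that separates eigenvalue type $(2,1)$ from $(1,1,1)$, and your framework as written does not see it. Similarly, the assertion that transitivity of the stabilizer of $J$ on the relevant sphere ``forces the module to be essentially irreducible, which is why $\dim\ag$ is pinned to $8$'' is a statement of the desired conclusion, not a derivation: for $\dim V\le 3$ arbitrarily many irreducible summands are permitted, so whatever argument pins $\dim\ag$ to $8$ for $\dim V\ge 5$ must use the specific structure of the commutant in those degrees, which is exactly the computation carried out in \cite{Z}, \cite{BRV} and \cite{L2} and absent here.
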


Notice that by Theorem~\ref{tha:class}, the nilpotent Lie groups in~\eqref{it:Cliff5} and~\eqref{it:Cliff73} admit no deformations of the structure constants (which of course preserves the condition of being non-singular) which are also GO (and a fortiori, WS), whereas the remaining ones do.

\section{Examples are weakly symmetric}
\label{s:nsws}

\medskip

We start by discussing the examples in our main theorem.

\medskip

Suppose $\dim V = 1$. We can identify $\ag$ with $\bc^p$ such that $J$ acts by multiplication by an imaginary number in each coordinate. If $P(X)=\overline{X}$, then  conjugation by $P$ takes $J$ to $-J$. We can now choose an element $Q$ in maximal torus    $\diag(e^{i\theta_1},\cdots,e^{i\theta_p} )$ which lies in the centralizer, with $Q(PX)= -X$ for all $X\in \ag$. Thus $QP$  satisfy property~\eqref{eq:wscond}. The inner product $\ip$ on $V$ is clearly arbitrary.

\medskip

Suppose $\dim V = 2$ with $ J_1=\diag(L_{a_1}, L_{a_2}, \dots,  L_{a_p}),\quad  J_2=\diag(L_{b_1}, L_{b_2}, \dots, L_{b_p})$ with $a_s,b_s$ imaginary quaternions. For each $s$, there exists a unit quaternion $q_s$ which is orthogonal to $a_s$ and $b_s$, and which anti-commutes with both $a_s$ and $b_s$. Hence  $P = \diag(L_{q_1}, \dots, L_{q_p})$ induces the action of $-\Id$ on $V$.  We can furthermore chose unit quaternions $r_s$ such that $R_{r_s}(L_{q_s}  X_s) = -X_s$, where $X_s \in \bh$ is the corresponding component of $X \in \ag$. Defining $Q = \diag(R_{r_1}, \dots, R_{r_p})$ it follows that $Q(PX)=-X$. Since $Q$ also commutes with $P$, this implies  that  $N = QP\in \Or(\ag)$ satisfies~\eqref{eq:wscond}.  The inner product $\ip$ on $V$ is arbitrary.

\medskip

Suppose $\dim V = 3$.  We have $V = \Span(J_1,J_2,J_3)$ with
\begin{equation*}
	J_1=\diag(\la_1 L_{\ir}, \dots, \la_p L_{\ir}), \quad J_2=\diag(\mu_1 L_{\jr}, \dots, \mu_p L_{\jr}), \quad J_3=\diag(\mu_1 L_{\kr}, \dots, \mu_p L_{\kr}),
\end{equation*}
where  $\la_s, \mu_s\in \br$ are non-zero.
Given an arbitrary $J = a J_1 + b J_2 + c J_3 \in V$ with $a,b,c \in \br$ and $X \in \ag$, let $r$ be a unit imaginary quaternion orthogonal to both $\ir$ and $b \jr + c \kr$. Then the element $P=\diag(L_r,L_r,\dots, L_r) \in \Or(\ag)$ anti-commutes with $J$. Moreover, $P$ lies in $ \sN(V)$ since  conjugation by $P$ sends $J_1$ to $-J_1$ and acts as an orthogonal transformation with determinant $-1$ on $\Span(J_2, J_3)$. We can now chose an element in the centralizer of $V$ of the form $Q = \diag(R_{r_1}, \dots, R_{r_p})$, for some unit quaternions $r_i$, such that $Q(PX)=-X$. Thus $N=QP$  satisfies the equations $N \cdot J = -J$ and $NX=-X$, as required.

\smallskip Finally, notice that $ \sN(V)$ acts by isometries in an inner product  on $V$ if and only if
 $J_1, J_2$ and $J_3$ are mutually orthogonal and $\|J_2\|=\|J_3\|$.

\medskip

Suppose $\dim V = 6$. Here we have  $V=\Span(J_1, J_2,$ $J_3, J_4, J_5,J')$ with $J'=J_7 \cos \theta + J_6J_7 \sin \theta, \; \theta \in (0, \pi/2)$,  where $J_1, \dots, J_7$ are pairwise anticommuting complex structures on $\br^8$.
By \cite[Lemma 7]{NZ} the subgroup $\SP(2)\subset \SO(8)$, with its standard embedding,  lies in $\sN(V)$ and acts on $\Span(J_1, J_2,$ $J_3, J_4, J_5)\subset V$ as $\SO(5)$  under the two-fold cover $\SP(2)\to \SO(5)$.
  Furthermore, $J_6$ clearly also lies in  $\sN(V)$ and acts, via conjugation, as $-\Id$ on $V$. For a generic $J\in V$ the subgroup of $\SO(5)$ that commutes with $J$ is equal to $\SO(4)\subset \SO(5)$. Its lift under the two-fold cover is   $\SP(1)\SP(1)\subset \SP(2)$  with its block embedding.  Since the action of $\SP(2)$ on $\ag=\bh^2$ is standard, $\SP(1)\SP(1)\subset \SP(2)$    acts  on each component of $\bh^2=\bh\oplus\bh$ separately. Thus for each $X\in\ag$, there exists a $Q\in \SP(1)\SP(1)$ with $Q(J_6X)=-X$ and hence $N=QJ_6$  satisfies~\eqref{eq:wscond}.
\smallskip

The  subgroup of $\sN(V)$ generated by $\SP(2)$ and $J_6$ acts by isometries in  an inner product  on $V$ if and only if its  restriction  to $\Span(J_1, J_2, J_3, J_4, J_5)$ is standard and $J' \perp J_i$, for $i=1, \dots, 5$.

\smallskip

Notice that in all of these examples, the eigenvalue type of $\ip$ on $V$ is the same as those of Clifford type, and we simply deform the structure constants of $\n$.

\section{Proof of the Theorem}
\label{s:proof}

We now need to show that the remaining GO-pairs in Theorem~\ref{tha:class} are not WS-pairs.

\smallskip

First consider the GO-pairs in case~\eqref{it:th3}\eqref{it:th3rep} of Theorem~\ref{tha:class}. It is sufficient to show that $\sN(V) \subset \Or(\bh^p) \times \Or(W) \subset \Or(\ag)$. Indeed, this fact implies that if the condition~\eqref{eq:wscond} is satisfied for $(V, \ip)$, then it is also satisfied with $\ag$ replaced by $W$, and $V$ replaced by its projection $V'$ to $\so(W)$. But such a $V'$ is the image of a skew-symmetric representation of the algebra $\su(2)$ on $W$ which is the direct sum of representations of dimension at least $8$, and then no such space $V'$ can be WS by \cite[Theorem~5 and Theorem~3]{L3}. The proof of the fact that $\sN(V) \subset \Or(\bh^p) \times \Or(W)$ is a Schur-type argument. Let $1 \le r \le p$, and let $W_s \subset W$ be an irreducible $\so(3)$-submodule of dimension $4m \, (\ge 8)$, with $\rho_s$ the corresponding representation. For $N \in \sN(V)$ denote $N_{rs}$ the $4 \times(4m)$ block of the matrix $N$ positioned in the rows corresponding to the $r$-th factor $\bh$ of $\bh^p$ and the columns corresponding to $W_s$. For any $a=1,2,3$, we have $J N = N J_a$, for some $J \in V$, and so for any $q \in \Im \bh$ there exists $q' \in \Im \bh$ such that $\la_r L_{q'} N_{rs} = N_{rs} \rho_s(q)$. Let $W' = \Ker N_{rs} \subset W_s$. Then the subspace $W'$ is nontrivial as $4 m > 4$ and $\rho_s$-invariant. As $\rho_s$ is irreducible, we obtain $W'=W$, and so $N_{rs}=0$. Thus the subspace $W \subset \ag$ is $\sN$-invariant, as required.

\smallskip

Next we consider the GO-pairs in case~\eqref{it:th3}\eqref{it:th3cen} of Theorem~\ref{tha:class}. For $s=1, \dots, p$, denote $\bh_s$ the $s$-th factor of $\bh^p$ and define the (non-singular) maps $A_s: \z \to \Im\bh$ such that $J_Z= \diag(L_{A_1Z}, \dots, L_{A_pZ})$. Then any $N \in \sN(V)$ induces the map $\phi_N: \z \to \z$ such that $N \cdot J_Z=J_{\phi_N Z}$. The map $\phi_N$ is orthogonal relative to both the given inner product on $\z$ and the standard one (inherited from the restriction of the Killing form on $\so(\ag)$ to $V$). Let $N_{rs}$ be the $4 \times 4$ block of the matrix $N$ positioned in the rows corresponding to $\bh_r$ and the columns corresponding to $\bh_s$. For all $Z \in \z$ we have
\begin{equation}\label{eq:Nrs}
  L_{A_rZ} N_{rs} = N_{rs} L_{A_s(\phi_N Z)}.
\end{equation}

We prove the following.

\begin{lemma} \label{l:Nsr}
  In the above notation, for the GO-pairs in case~\eqref{it:th3}\eqref{it:th3cen} of Theorem~\ref{tha:class}, the following holds.
  \begin{enumerate}[label=\emph{(\alph*)},ref=\alph*]
    \item \label{it:Nrsw}
    Suppose that $N_{rs} \ne 0$ for some $s,r=1, \dots, p$. Then there exist $w_{rs}, w'_{rs} \in \bh$ with $\|w_{rs}\|=1,\, w_{rs}' \ne 0$, such that $N_{rs}=L_{w_{rs}} R_{w'_{rs}}$. Moreover, $w_{rs}^{-1}(A_rZ)w_{rs}=A_s(\phi_NZ)$, for all $Z \in \z$, and we have $\det(A_sA_r^{-1}) = \det(\phi_N) \, (\in \{\pm 1\})$.

    \item \label{it:Nrsnond}
    The group $\sN(V)/\sC(V)$ cannot be discreet.
\end{enumerate}
\end{lemma}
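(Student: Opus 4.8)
The plan is to handle the two parts separately, in both cases exploiting the intertwining relation \eqref{eq:Nrs}. For~\eqref{it:Nrsw} I would first eliminate the center: since $A_r$, $A_s$ and $\phi_N$ are invertible, as $Z$ runs over $\z$ the quaternion $u=A_rZ$ runs over all of $\Im\bh$, so \eqref{eq:Nrs} becomes $L_uN_{rs}=N_{rs}L_{\psi(u)}$ for all $u\in\Im\bh$, where $\psi:=A_s\phi_NA_r^{-1}\in\GL(\Im\bh)$. The first claim is that $N_{rs}$ is invertible: if $N_{rs}x=0$ then $N_{rs}L_{\psi(u)}x=L_uN_{rs}x=0$, so $\Ker N_{rs}$ is invariant under every $L_{\psi(u)}$; as $\psi$ is onto, these operators together with $\Id$ span $L_\bh\cong\bh$, which acts irreducibly on $\bh_s$, forcing $\Ker N_{rs}\in\{0,\bh_s\}$, hence $\Ker N_{rs}=0$.

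Next I would identify $\psi$. Because $L$ is an algebra homomorphism, for a unit $u\in\Im\bh$ the relation gives $-N_{rs}=L_{u^2}N_{rs}=N_{rs}L_{\psi(u)^2}$, whence $\psi(u)^2=-1$; thus $\psi$ carries unit imaginary quaternions to unit imaginary quaternions and lies in $\Or(\Im\bh)$, and the analogous computation with $uv=u\times v$ for $u\perp v$ shows $\psi(u\times v)=\psi(u)\times\psi(v)$, so $\psi\in\SO(\Im\bh)=\SO(3)$. Writing $\psi$ as conjugation $u\mapsto w_{rs}^{-1}uw_{rs}$ by a unit quaternion $w_{rs}$ and using $L_{\psi(u)}=L_{w_{rs}}^{-1}L_uL_{w_{rs}}$, the operator $M:=N_{rs}L_{w_{rs}}^{-1}$ commutes with every $L_u$ and is therefore a right multiplication $R_{w'_{rs}}$ with $w'_{rs}\ne0$; this yields $N_{rs}=L_{w_{rs}}R_{w'_{rs}}$. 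Finally, unwinding $\psi=A_s\phi_NA_r^{-1}$ gives $w_{rs}^{-1}(A_rZ)w_{rs}=A_s(\phi_NZ)$, and comparing $\det\psi=1$ with $\det\phi_N=\pm1$ gives $\det(A_sA_r^{-1})=\det\phi_N$.

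For~\eqref{it:Nrsnond} the content I would prove is that discreteness of $\sN(V)/\sC(V)$ is incompatible with the weakly symmetric condition; since $\sC(V)$ is precisely the kernel of $N\mapsto\phi_N$, the group $\sN(V)/\sC(V)$ is isomorphic to the image $G\subseteq\Or(\z)$ of this homomorphism, and as $\Or(\z)$ is compact, $G$ is discrete iff it is finite. So assume $G$ is finite and that \eqref{eq:wscond} holds. Taking $J=J_Z$ in \eqref{eq:wscond}, for every unit $Z\in\z$ there is $\phi\in G$ with $\phi Z=-Z$; that is, the unit sphere of $\z\cong\br^3$ is covered by the $(-1)$-eigenspaces of the finitely many elements of $G$. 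A finite union of proper subspaces cannot cover a $2$-sphere, so one eigenspace is all of $\z$, i.e. $-\Id\in G$; moreover a generic $Z$ avoids all the proper eigenspaces and is therefore reversed only by $-\Id$.

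The contradiction now comes from part~\eqref{it:Nrsw}. Fix such a generic $Z$ and a nonzero vector $X$ supported in a single factor $\bh_1$. Weak symmetry demands $N\in\sN(V)$ with $\phi_NZ=-Z$ and $NX=-X$; genericity of $Z$ forces $\phi_N=-\Id$, and then $\det\phi_N=\det(-\Id_\z)=-1\ne1=\det(A_1A_1^{-1})$, so by part~\eqref{it:Nrsw} the diagonal block $N_{11}$ vanishes. But then $(NX)_1=N_{11}X_1=0\ne-X_1$, a contradiction. Hence $G$ is infinite and $\sN(V)/\sC(V)$ is not discrete. I expect the crux to be exactly this last link: translating the purely central requirement $\phi_N=-\Id$ into a rigid constraint on $N$ itself, which is supplied by the determinant identity of~\eqref{it:Nrsw} and rules out the diagonal blocks needed to reverse a vector concentrated in one factor.
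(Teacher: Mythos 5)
Your proposal is correct and follows essentially the same route as the paper: part (a) via the intertwining relation, irreducibility of left multiplication to get invertibility of $N_{rs}$, realization of $\psi$ as conjugation by a unit quaternion, and the determinant identity; part (b) via finiteness of the image of $N\mapsto\phi_N$, forcing $\phi_N=-\Id$ for generic $Z$, and then using the determinant identity to contradict $NX=-X$. Your endgame for (b) (taking $X$ supported in a single factor $\bh_1$ and noting $N_{11}=0$) is a mild streamlining of the paper's three-block decomposition of $\ag$, but the underlying argument is identical.
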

\begin{proof}
  To prove assertion~\eqref{it:Nrsw}, for some fixed $s,r=1, \dots, p$, denote $Q=N_{rs} \ne 0$ and define the linear map $\psi:\bh \to \bh$ by $\psi_{|\Im\bh} = (A_s  \circ \phi_N \circ A_r^{-1})_{|\Im\bh}$ on $\Im\bh$ and by $\psi(1)=1$; note that $\psi$ is non-singular. From~\eqref{eq:Nrs} we obtain $L_{u} Q = Q L_{\psi(u)}$, for all $u \in \bh$. If $X \in \Ker Q$, then also $L_{\psi(u)}X \in \Ker Q$ which implies that the subspace $\Ker Q \subset \bh$ is $L_v$-invariant, for all $v \in \bh$. It follows that $Q$ is non-singular, since $Q \ne 0$. Then $QL_{\psi(uv)}=L_{uv}Q=L_uL_vQ=QL_{\psi(u)}L_{\psi(v)} = QL_{\psi(u)\psi(v)}$, for all $u,v \in \bh$, and so $\psi:\bh \to \bh$ is an isomorphism. Then $\psi(u)=w^{-1}uw$ for some unit $w \in \bh$, which gives $L_uQL_w^{-1}=QL_w^{-1}L_u$, for all $u \in \bh$. Thus $QL_w^{-1}$ commutes with all the left multiplications; it follows that it is a right multiplication, so that $Q= L_wR_{w'}$, for some $w' \in \bh$. Then $w^{-1}(A_rZ)w=A_s(\phi_NZ)$, for all $Z \in \z$, as required. In particular, $\det(A_sA_r^{-1}) = \det(\phi_N)$, and $\det(\phi_N) \in \{\pm 1\}$, as $\phi_N$ is orthogonal.

  For assertion~\eqref{it:Nrsnond}, suppose that the group $\sN(V)/\sC(V)$ is discreet. Since both groups are compact, this means that $\sN(V)/\sC(V)$ is finite. As the map $N \mapsto \phi_N$ is well-defined on the cosets from $\sN(V)/\sC(V)$, its image is a finite subset of $\Or(\z)$. By~\eqref{eq:wscond}, any $Z \in \z$ belongs to the $(-1)$-eigenspace of $\phi_N$, for some $N \in \sN(V)$, and so there exists an open subset $U \subset \z$ such that the condition $N \cdot J_Z = -J_Z$ for $Z \in U$ is only satisfied with some $N \in \sN(V)$ such that $\phi_N = -\Id$. From assertion~\eqref{it:Nrsw} it follows that for $s,r=1, \dots, p$, the block $N_{rs}$ of such an $N$ can only be nonzero if $\det(A_sA_r^{-1}) = -1$. Consider the orthogonal decomposition $\ag=\bg_1 \oplus \bg_2 \oplus \bg_3$, where
\begin{equation*}
  \bg_1 = \bigoplus\nolimits_{r: \det(A_1A_r^{-1}) = 1} \bh_r, \quad \bg_2 = \bigoplus\nolimits_{r: \det(A_1A_r^{-1}) = -1} \bh_r, \quad \bg_3 = \bigoplus\nolimits_{r: \det(A_1A_r^{-1}) \notin \{\pm 1\}} \bh_r.
\end{equation*}
  Then $N \bg_1 \subset \bg_2, \, N \bg_2 \subset \bg_1$ and $N \bg_3 \subset \bg_3$. Clearly $\bg_1 \ne 0$ and moreover, as $N$ is orthogonal, we have $\dim \bg_1 = \dim \bg_2$ and
\begin{equation*}
  N = \left(\begin{array}{ccc} 0 & Q_1 & 0 \\ Q_2 & 0 & 0 \\ 0 & 0 & Q_3 \end{array} \right),
\end{equation*}
relative to the decomposition $\ag=\bg_1 \oplus \bg_2 \oplus \bg_3$, where the matrices $Q_1, Q_2$ and $Q_3$ are orthogonal. But then the second equation of~\eqref{eq:wscond} is violated if we take $X$ to be a nonzero vector from $\bg_1$.
\end{proof}

Consider the pure normalizer $\Pg(V) \subset \so(\ag)$. From~\cite[Lemma~2(d)]{NZ} it is isomorphic to a subalgebra of $\so(3)$ and lies in the subspace $\oplus_{s=1}^p \so(\bh_s)$. With our choice of the basis, any element $\diag(R_{u_1}, \dots, R_{u_p})$, where $u_s \in \Im\bh$, lies in the centralizer $\cC(V)$, and so $\Pg(V)$ must be a subalgebra of the algebra $\oplus_{s=1}^p \so(3)$, spanned by the elements $\diag(L_{u_1}, \dots, L_{u_p})$, where $u_s \in \Im\bh$. Moreover, the subalgebra $\Pg(V)$ cannot be trivial, by Lemma~\ref{l:Nsr}\eqref{it:Nrsnond}. Let $P = \diag(L_{u_1}, \dots, L_{u_p}) \in \Pg(V)$ be nonzero. Then $\|u_1\| = \dots = \|u_p\|$ as $V$ must be $\ad_{P}$-invariant. Choosing an appropriate basis in each coordinate, we can assume that $P = \diag(L_{\ir}, \dots, L_{\ir})$. As $V$ is an $\ad_{P}$-module, we obtain $V = \Span(J_1,J_2,J_3)$, where
\begin{equation*}
  J_1=\diag(\la_1 L_{\ir}, \dots, \la_p L_{\ir}), \quad J_2=\diag(\mu_1 L_{\jr}, \dots, \mu_p L_{\jr}), \quad J_3=\diag(\mu_1 L_{\kr}, \dots, \mu_p L_{\kr}),
\end{equation*}
for some nonzero constants $\la_s, \mu_s, \, s = 1, \dots, p$ (by additionally rotating the subspaces $\Span(\jr, \kr) \subset \bh_s$ by a choice of the basis, if necessary). Moreover, the action of $\ad_{P}$ on $V$ must be skew-symmetric, and so an admissible inner product $\ip$ on $V$ must satisfy $J_1 \perp J_2, J_3$ and $\|J_2\|=\|J_3\|$, as in~\eqref{it:ws3} of the main Theorem.

\bigskip

Finally, we consider the case where $\dim V = 7$. By Theorem~\ref{tha:class}\eqref{it:th7}, we have $\dim \ag=8$ and $V=\Span(J_1, J_2,$ $J_3, J_4, J_5, J_6, J')$, where $J'=J_7 \cos \theta + J_6J_7 \sin \theta, \; \theta \in (0, \pi/2)$. We prove that for $J_6 \in V$, there is no $N \in \sN(V)$ such that $N \cdot J_6 = -J_6$, and hence condition~\eqref{eq:wscond} cannot possibly be satisfied. It is sufficient to show this assuming the inner product on $V$ to be standard; we take $\|J\|^2 = -\frac18 \Tr J^2$ for $J \in V$, so that the above basis for $V$ is orthonormal (recall that $\Tr (J_iJ_6J_7) = 0$ for all $i=1, \dots, 5$).

For a vector $u=(u_1, \dots, u_5) \in \br^5$, denote $J_u=\sum_{i=1}^{5} u_iJ_i$. Take an element $J=J_u + a J_6 + bJ' \in V$ with $a, b \in \br$. We have $J^2 = -(\|u\|^2+a^2+b^2) I_8 + 2 b \sin \theta J_uJ_6J_7$. The operator $A=J_uJ_6J_7$ is symmetric, has zero trace, and $A^2=\|u\|^2 I_8$, and so the eigenvalues of $A$ are $\pm \|u\|$, both of multiplicity $4$ (provided $u \ne 0$). Therefore $J^2$ has two eigenvalues, $-(\|u\|^2+a^2+b^2) \pm 2 b \, \|u\| \sin \theta = -\|J\|^2 \pm 2\sin \theta \<J,J'\>(\sum_{i=1}^5\<J,J_i\>^2)^{1/2}$, both of multiplicity $4$ (provided $b \, \|u\| \ne 0$). For $N \in \sN(V)$, the conjugation by $N$ acts orthogonally on $V$ and preserves the eigenvalues of $J^2$. It follows that it preserves the function $\Phi(J) = \Tr ((J^2+\|J\|^2 I_8)^2) = 4 \sin^2 \, \theta \<J,J'\>^2 \sum_{i=1}^5\<J,J_i\>^2$ for $J \in V$. Hence for any $J \in V$ and $N \in \sN(V)$ we have \begin{equation}\label{eq:dimV7}
\<J,J'\>^2\sum\nolimits_{i=1}^5\<J,J_i\>^2=\<J,N \cdot J'\>^2\sum\nolimits_{i=1}^5\<J,N \cdot J_i\>^2.
\end{equation}
Taking $J \perp J'$ in~\eqref{eq:dimV7} we obtain $\<J,N \cdot J'\>^2\sum\nolimits_{i=1}^5\<J,N \cdot J_i\>^2=0$, and so $J \perp N \cdot J'$. It follows that the subspace $\br\,J' \subset V$ is $N \cdot $-invariant, and so $N \cdot J'=\pm J'$. Then the subspace $(J')^\perp = \Span(J_1, J_2, J_3, J_4, J_5, J_6)$ is also $N \cdot $-invariant, and so $\sum_{i=1}^6\<J,J_i\>^2=\sum_{i=1}^6\<J,N \cdot J_i\>^2$ which, together with~\eqref{eq:dimV7} and the fact that $N \cdot J'=\pm J'$ gives $\<J,N \cdot J_6\> = \pm\<J,N \cdot J_6\>$. Thus the subspace $\br\,J_6 \subset V$ is also $N \cdot $-invariant, and $N \cdot J_6=\pm J_6$. Then the subspace $\Span(J_1, J_2, J_3, J_4, J_5) = (\Span(J_6, J')^\perp \cap V$ is also $N \cdot $-invariant. Since for any orthonormal basis $\{v^1, \dots, v^5\} \subset \br^5$ we have $J_{v^1} \dots J_{v^5} = \pm J_1 \dots J_5$ we obtain $N \cdot (J_1 \dots J_5) = \pm J_1 \dots J_5$. But $J_1 \dots J_5J_6J_7=\pm I_8$ which implies $N \cdot (J_6J_7)=\pm J_6J_7$. Since we already know that $N \cdot J_6=\pm J_6$, this gives $N \cdot J_7=\pm J_7$. Hence for any $N \in \sN(V)$ there exist $\ve_6,\ve_7,\ve' \in \{\pm 1\}$ such that $N \cdot J_6=\ve_6 J_6,\, N \cdot J_7=\ve_7 J_7$ and $N \cdot J'=\ve' J'$. These equations give $\ve_7 J_7 \cos \theta + \ve_6\ve_7 J_6J_7 \sin \theta = \ve'(J_7 \cos \theta + \ve_6\ve_7 J_6J_7 \sin \theta)$, and so $\ve'=\ve_7=\ve_6\ve_7$. But then $\ve_6 = 1$, and so for no $N \in \sN(V)$ we can have $N \cdot J_6 = -J_6$, in violation of~\eqref{eq:wscond}.

\smallskip

This completes the proof of our main  Theorem.

\section{Appendix}
\label{s:discuss}
  We discuss the relation of our classification with the known partial classification of WS nilmanifolds given in \cite[Theorem~5.7]{Ya2}, \cite[Theorem~15.4.10]{Wo2} and \cite[Table~3]{Vi}. We will be specifically interested in placing the WS nilmanifold obtained from the WS-pair in part (d) of our main theorem   in this context. The said classification is based on the fact that a WS space is commutative in the sense of~\cite{Sel}.

  It is known that any commutative nilmanifold is at most $2$-step nilpotent, and in the above references, the authors give the classification of all irreducible, maximal commutative nilmanifolds $(N \rtimes H)/H$, where $N$ is a nilpotent group with a left-invariant metric and $H$ is a group of  isometric automorphisms of $\n$. We can also assume that  the center $\z$ of the Lie algebra $\n$ of $N$ and coincides with the derived algebra of $\n$. A nilmanifold $(N \rtimes H)/H$ is called \emph{irreducible}, if  $H$ acts irreducibly on $\ag$. Given a commutative nilmanifold $(N \rtimes H)/H$, let $\z_0 \subset \z$ be an $H$-module, with $Z_0 \subset N$ the corresponding subgroup. Then the space $(N/Z_0 \rtimes H)/H$ is again commutative. This process of passing from $N$ to $N/Z_0$ is called a central reduction. A \emph{maximal} commutative nilmanifold is a one that cannot be obtained by a central reduction from a bigger one. After giving the classification of irreducible, maximal commutative nilmanifolds, the authors prove that all of them, except for one, are weakly symmetric (notice however that almost none of them are non-singular).

  This exceptional space is given by $\ag = \bh^p, \, H = \SP(p)$ and $\z = S^2_0 \bh^p  \oplus \Im \bh$, where $S^2_0 \bh^p$ is the $\SP(p)$-module of $\bh$-hermitian $p \times p$ matrices with trace $0$, and $\Im \bh$ is the trivial $\SP(p)$-module. In our notation, $\Im \bh \subset \so(4p) = \so(\ag)$ is the subalgebra $\spg(1)=\{ R_{\mathrm{i}}, R_{\mathrm{j}}, R_{\mathrm{k}} \}$  of right multiplications on $\bh^p$ and $\SP(p) \subset \sN(V)$ acts by left multiplication.

  The nilmanifold in~\cite[Theorem~5]{L1} is constructed by taking the central reduction by the subspace $S^2_0 \bh^2\subset\z$ and then choosing the inner product on $\Im(\bh)$ whose eigenvalue type is $(1,1,1)$. This prevents the group of all isometric automorphisms to be bigger than $\SP(2)$. The resulting nilmanifold is commutative, but  not WS. However, if one takes the eigenvalue type $(2,1)$ instead, one obtains a WS nilmanifold since $\sN(V)$ also contains the subgroup $\Or(2)$. This example is a special case of nilmanifolds in part~\eqref{it:ws6} of the main Theorem.

  In case $p = 2$, the above maximal nilmanifold admits an alternative description in terms of pairwise commuting complex structures $J_1, \dots, J_7$ on $\ag = \br^8$. Namely, the space $V \subset \so(8)$ is the sum of the $5$-dimensional $\SP(2)$-module $V_1$ spanned by the by $J_1, \dots, J_5$, and the 3-dimensional trivial module $\Span(J_6, J_7, J_6 J_7)$, where $\SP(2) \,(= \Spin(5))$ can be viewed as the connected Lie group with the Lie algebra spanned by the elements $J_i J_j, \, 1 \le i < j \le 5$. The resulting nilmanifold is commutative and not-WS. However some of its central reductions \emph{are} weakly symmetric.

  Consider all central reductions such that the resulting subspace $V \subset \so(8)$ is the given by $V = V_1 \oplus V'$, where $V'$ is a proper subspace of $\Span(J_6, J_7, J_6 J_7)\simeq \spg(1)$. The inner product $\ip$ on $V$ is such that its restriction to $V_1$ is standard and that $V' \perp V_1$. All the resulting nilmanifolds are commutative, but some are weakly-symmetric, and some are not. Let $J'=J_7 \cos \theta + J_6 J_7 \sin \theta$. Up to isometry, all possible cases are as follows:
  \begin{itemize}
    \item
    If $V' = 0$, then the pair $(V,\ip)$ is WS, and is of Clifford type \cite{Z}, \cite{BRV}, \cite[Corollary~5.11]{L2}.

    \item
    If $V' = \br \cdot J'$, the pair $(V,\ip)$ is WS, for any $\theta \in [0, \pi/2]$. If $\theta \in (0, \pi/2)$, this is the WS-pair in part~\eqref{it:ws6} of our main Theorem. The same proof also works for $\theta = 0, \pi/2$ (but in the latter case, the subspace $V$ is not non-singular). If $\theta=0$, the WS-pair of Clifford type \cite[Corollary~5.11]{L2}.

    \item
    If $V' = \Span(J_6, J'), \, \theta \in (0, \pi/2)$, then the pair $(V,\ip)$ is not WS, for any inner product on $V'$, see Section~\ref{s:nsws}.

    \item
    If $V'=\Span(J_6, J_7)$, then the pair $(V,\ip)$ is WS if and only if the restriction of $\ip$ to $V'$ is standard, see~\cite[Corollary~5.11]{L2}. The resulting WS-pair is of Clifford type. Note that in this case, $\sN(V) \supset \SP(2) \times \mathrm{U}(1)$.

    \item
    If $V' = \Span(J_6, J_6J_7)$, then the pair $(V,\ip)$ is WS, for any inner product on $V'$, but $V$ is not non-singular. In this case, the group $\sN(V)$ contains the element $J_7$, the conjugation by which acts as $-\Id$ on $V$. Then an argument similar to the one in the case $\dim V = 6$ in Section~\ref{s:nsws} shows the pair is WS.
  \end{itemize}

\end{document}